\newtheorem{thm}{Theorem}[section]
\newtheorem{lem}[thm]{Lemma}
\newtheorem{cor}[thm]{Corollary}
\theoremstyle{definition}
\theoremstyle{remark}
\newtheorem*{remark}{Remark}
\newcommand{\la}{\lambda}
\newcommand{\bH}{\mathbb{H}}
\newcommand{\ve}{\varepsilon}
\DeclareMathOperator{\supp}{supp}
\title[Strong continuity on Hardy spaces]{Strong continuity on Hardy spaces}
\author{Jacek Dziuba\'nski and B\l a\.{z}ej Wr\'obel}
\address{
Instytut Matematyczny, Uniwersytet Wroc\l awski, pl. Grunwaldzki 2/4, 50-384 Wroc\l aw, Poland}
\email{jdziuban@math.uni.wroc.pl}
\address{Dipartimento di Matematica e Applicazioni, Universit\`{a} di Milano-Bicocca,
via R. Cozzi 53 I-20125, Milano, Italy,
\newline \&
Instytut Matematyczny, Uniwersytet Wroc\l awski, pl. Grunwaldzki 2/4, 50-384 Wroc\l aw, Poland}
\email{blazej.wrobel@math.uni.wroc.pl}
\subjclass[2010]{42B30, 47D06, 47A60}
\keywords{Hardy space, strong continuity, semigroup of linear operators}
\begin{document}

 \begin{abstract}
We prove the strong continuity of spectral multiplier operators associated with dilations of certain functions  on the general Hardy space $H^1_L$ introduced by Hofmann, Lu, Mitrea, Mitrea, Yan. Our results include the heat and Poisson semigroups as well as the group of imaginary powers.
\end{abstract}

  \maketitle
 \numberwithin{equation}{section}

\section{Introduction}

In the theory of semigroups of linear operators on Banach spaces the crucial assumption is that of strong continuity. One often encounters a situation where the semigroup $T_t=e^{-tL}$ is initially defined on $L^2(\Omega)$ and $L$ is a non-negative self-adjoint operator. In this case the spectral theorem immediately gives the strong $L^2(\Omega)$ continuity $\lim_{t\to 0^+}\|T_t f-f\|_{L^2(\Omega)}=0,$ for $f\in L^2(\Omega)$. Assume additionally that $\{T_t\}_{t>0}$ extends to a locally bounded semigroup on $L^p.$ More precisely, we impose that for each $1\leq p<\infty$ there exists $t_p>0$ such that $\|T_t \|_{L^p(\Omega)\to L^p(\Omega)}\leq C_p,$ $t\in[0,t_p].$ Since weak and strong convergence coincide for semigroups of operators (see e.g.\ \cite[Theorem 5.8]{En_Nag_1}), it is  straightforward to see that $T_t$ is strongly continuous on all $L^p(\Omega),$ $1 <p<\infty.$ Moreover, if we assume that $\{T_t\}_{t>0}$ is contractive on $L^1(\Omega),$ then it is also strongly continuous on $L^1(\Omega).$ Quite often the semigroup $\{T_t\}_{t>0}$ may be also defined on function spaces other than $L^p.$ For instance, if $T_t=e^{t\Delta}$ is the classical heat semigroup on $\mathbb{R}^d,$ then it also acts on the atomic Hardy spaces $H^1_{at}.$ However, even in this case it is not obvious that the semigroup is strongly continuous on $H^1_{at}.$

In this paper we impose that $\{T_t\}_{t>0}$ satisfies the so-called Davies-Gaffney estimates (see \eqref{eq:D-G}), and that the underlying space $\Omega$ is a space of homogeneous type in the sense of Coifman-Weiss
\cite{CW}. Under these assumptions,  as a corollary of our main result, we prove that  $e^{-tL}$ and $e^{-t\sqrt{L}}$  are  strongly continuous on the Hardy space $H_L^1.$ This Hardy space was introduced by Hofmann, Lu, Mitrea, Mitrea, Yan  in \cite{HofLMiMiYa1}. Our results are quite general, as there are many operators $L$  satisfying \eqref{eq:D-G}, e.g. Laplace-Beltrami operators on complete Riemannian manifolds (see e.g. \cite[Corollary 12.4]{Gri1}) or  Schr\"odinger operators with non-negative potentials.

The literature on $L^p$ spectral multipliers for operators satisfying Davies-Gaffney estimates is vast. However, as the $L^p$ theory is not discussed in our paper, we do not provide detailed references on this subject. Instead we kindly refer the interested reader to consult e.g. \cite{SiYanYao1} and references therein. There are also results for spectral multipliers on the Hardy space $H_L^1$ (or more generally $H_L^p$),  see e.g.\ \cite{DuLi1}, \cite{DuYan1}, \cite{DzPr_Arg}, and \cite{KunUhl}.

The methods we use are based on \cite{DzPr_Arg}, in which the authors proved a H\"ormander-type multiplier theorem on $H_L^1$. The result for semigroups (Corollary \ref{cor:StrHeat}) is a consequence of Theorem \ref{thm:dilgen}, which treats dilations of more general multipliers than $e^{-\la}.$ Finally, using Theorem \ref{thm:dilgen} we also prove the strong $H^1_L$ continuity of the group of imaginary powers
$\{L^{iu}\}_{u\in \mathbb{R}},$ see Corollary \ref{cor:StrIma}.

\section{Preliminaries}
Let $(\Omega, \, d(x,y))$ be a metric space equipped with a
positive measure $\mu$. We assume that $(\Omega, \, d, \, \mu)$ is
a space of homogeneous type in the sense of Coifman-Weiss
\cite{CW}, that is, there exists a constant $C>0$ such that
 \begin{equation}\label{eq:doubling}
 \mu(B_d(x,2t))\leq C\mu (B_d(x,t)) \ \ \
 \text{for every} \ x\in\Omega, \ t>0,
 \end{equation}
 where $B_d(x, t)=\{ y\in\Omega: \ d(x,y)<t\}$.
  The condition (\ref{eq:doubling}) implies that there exist  constants
   $C_0>0$ and $q>0$ such that
  \begin{equation}\label{eq:growth}
 \mu(B_d(x,st))\leq C_0 s^q \mu(B_d(x,t)) \ \ \ \text{for every }
 x\in \Omega, \ t>0, \ s>1.
 \end{equation}
In what follows we set $n_0$ to be the infimum over $q$ in \eqref{eq:growth}.

   Let   $\{e^{-tL}\}_{t>0}$ be a semigroup of linear operators on $L^2(\Omega, \, d\mu)$
   generated by  $-L$, where $L$ is a non-negative, self-adjoint
    operator. We assume additionally that $L$ is injective on its domain.  Throughout the paper we impose that $T_t:=e^{-tL}$
   satisfies
 Davies-Gaffney estimates, that is,
 \begin{equation}\label{eq:D-G} |\langle T_t f_1,f_2\rangle |\leq C
 \exp\left(-\frac{\text{dist}(U_1,U_2)^2}{ct}\right)\|
 f_1\|_{L^2(\Omega)}\| f_2\|_{L^2(\Omega)}
 \end{equation}
 for every $f_i\in L^2(\Omega)$, $\text{supp}\, f_i\subset U_i$,
 $i=1,2$, $U_i$ are open subsets of $\Omega.$

  Davies-Gaffney estimates are equivalent to the finite speed propagation of the wave equation; the reader interested in this topic is kindly referred to \cite{CouSi1}. The finite speed propagation of the wave equation is used in the proof of \cite[Lemma 4.8]{DzPr_Arg} (our Lemma \ref{lem:Lem48}), which is an important ingredient in the proof of our main Theorem \ref{thm:dilgen}.


For $f\in L^2(\Omega)$ we consider the
 square  function $S_hf$ associated with $L$   defined by
\begin{equation*}
 S_hf(x)=\left(\iint_{\Gamma (x)}|t^2L T_{t^2}
 f(y)|^2\frac{d\mu(y)}{V(x,t)}\frac{dt}{t}\right)^{1\slash 2},
 \end{equation*}
where $\Gamma (x)=\{(y,t)\in\Omega \times (0,\infty):\ d(x,y)\leq
t\}$.

We define the Hardy space $H^1_L=H^1_{L,
S_h}(\Omega)$ as the (abstract) completion of
$$\{f\in L^2(\Omega):\ \|
S_hf\|_{L^1(\Omega)}<\infty\}$$
 in the norm $\| f\|_{H^1_L}=\|S_h
f\|_{ L^1(\Omega)}$.

 It was proved in Hofmann, Lu, Mitrea, Mitrea, Yan \cite{HofLMiMiYa1} that under our assumption \eqref{eq:D-G}
   the space $H^1_L$
    admits the following atomic
 decomposition.

  Let $M\geq 1$, $M\in \mathbb N$. A function $a$ is a
  $(1,2,M)$-atom for
  $H^1_L$  if there exist a ball
  $B=B_d(y_0,r)=\{ y\in \Omega: \, d(y,y_0)<r\}$ and a function
  $b\in \mathcal D(L^M)$  such that
  \begin{equation*}
 a=L^Mb;\end{equation*}
  \begin{equation*}
\text{supp}\, L^k b \subset B,  \ \
  k=0,1,...,M ; \end{equation*}
  \begin{equation*}
\| (r^2L)^kb\|_{L^2(\Omega)}
  \leq r^{2M}\mu(B)^{-1\slash 2}, \ \ k=0,1,...,M.
  \end{equation*}
We say that $f=\sum_j\lambda_j a_j$ is a $(1,2,M)$ atomic representation (of $f$) if $\{\la_j\}_{j=0}^{\infty}\in l^1,$ each $a_j$ is a $(1,2,M)$ atom, and the sum converges in $L^2.$ Then we set
\begin{align*}
\bH^1_{L,at,M}=\bigg\{f\colon \textrm{$f$ has an atomic $(1,2,M)$-representation}\bigg\},
\end{align*}
with the norm given by
\begin{align*}
\|f\|_{\bH^1_{L,at,M}}=\inf\bigg\{\sum_{j=0}^{\infty} |\la_j|\colon f=\sum_{j=0}^{\infty}\lambda_j a_j\textrm{ is an atomic $(1,2,M)$ representation}\bigg\}.
\end{align*}
The space $H^1_{L,at,M}$ is defined as the (abstract) completion of $\bH^1_{L,at,M}.$

Theorem 4.14 of \cite{HofLMiMiYa1}
asserts that for each $M> n_0/4$ there exists a constant $C>0$ such that
\begin{equation*}
 C^{-1} \| f\|_{H^1_L}\leq  \| f\|_{H^1_{L,at,M}}\leq C\|
 f\|_{H^1_L}.
\end{equation*}

In \cite{HofLMiMiYa1} the authors gave also a molecular description of $H^1_L.$ Fix $\varepsilon >0$ and $M>n_0\slash 4$, $M\in\mathbb N$.
  We say that a function $\tilde a$ is a
 $(1,2,M,\varepsilon )$-molecule associated to $L$ if
 there exist a function $\tilde b\in \mathcal D(L^M)$ and
 a ball $B=B_d(y_0,r)$ such that
 \begin{equation*}
  \tilde a=L^M \tilde b;
  \end{equation*}
 \begin{equation*}
\|(r^2L)^k\tilde b\|_{L^2(U_jB))}
 \leq r^{2M }2^{-j\varepsilon } \mu(B(y_0, 2^jr))^{-1\slash  2}
 \end{equation*}
for  $ k=0,1,...,M$,  $j=0,1,2,...$, where
$U_0=B$, $U_j(B)=B_d(y_0,2^jr)\setminus B_d(y_0, 2^{j-1}r)$ for $j\geq 1$. The decomposition $f=\sum_j\lambda_j \tilde a_j$ is a $(1,2,M,\ve)$ molecular representation (of $f$) if $\{\la_j\}_{j=0}^{\infty}\in l^1,$ each $\tilde a_j$ is a $(1,2,M,\ve)$ molecule, and the sum converges in $L^2.$ Then we define
\begin{align*}
\bH^1_{L,mol,M,\ve}=\bigg\{f\in L^2(\Omega)\colon \textrm{$f$ has a molecular $(1,2,M,\ve)$-representation}\bigg\},
\end{align*}
with the norm given by
\begin{align*}
\|f\|_{\bH^1_{L,mol,M,\ve}}=\inf\bigg\{\sum_{j=0}^{\infty} |\la_j|\colon f=\sum_{j=0}^{\infty}\lambda_j \tilde a_j\textrm{ is a molecular $(1,2,M,\ve)$ representation}\bigg\}.
\end{align*}
The space $H^1_{L,mol,M,\ve}$ is defined as the (abstract) completion of $\bH^1_{L,mol,M}.$

 It was proved in \cite[Corollary 5.3] {HofLMiMiYa1} that for each $M>n_0/4$ and $\ve>0$ it holds $\bH^1_{L,at,M}=\bH^1_{L,mol,M,\ve},$
with the equivalence of the norms. Moreover, we have $H_L^1=H^1_{L,at,M}$ and, consequently, $H_L^1=H^1_{L,at,M}=H^1_{L,mol,N,\ve},$ for $N,M > n_0/4.$

The following lemma is a slight extension of the observation following the proof of \cite[Corollary 5.3] {HofLMiMiYa1}.
\begin{lem}
\label{lem:EnoughOnAtoms}
Let $T$ be an operator which is bounded on $L^2.$ Assume that there are  $\ve>0$ and positive integers $M,N >n_0/4$ such  that $T$ maps $(1,2,M)$ atoms uniformly to $(1,2,N,\ve)$ molecules. More precisely, we impose that there is an $A>0$ such that  $\|T(a)\|_{\bH^1_{L,mol,N,\ve}}\leq A \|a\|_{\bH^1_{L,at,M}}$ for all $(1,2,M)$ atoms $a$. Then $T$ has the unique bounded extension $T^{ext}$ to $H_{L}^1$ which satisfies
\begin{equation*}
\|T^{ext}f\|_{H_L^1}\leq C\, A \|f\|_{H_L^1}.
\end{equation*}
\end{lem}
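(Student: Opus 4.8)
The plan is to establish the bounded extension by working first on the dense subspace $\bH^1_{L,at,M}$ and then invoking abstract completion. First I would fix an $f\in\bH^1_{L,at,M}$ with an atomic representation $f=\sum_{j=0}^{\infty}\la_j a_j$ converging in $L^2$, with $\sum_j|\la_j|<\infty$. Since $T$ is bounded on $L^2$, applying $T$ commutes with the $L^2$-convergent sum, so $T(f)=\sum_j\la_j T(a_j)$ in $L^2$. By hypothesis each $T(a_j)$ lies in $\bH^1_{L,mol,N,\ve}$ with $\|T(a_j)\|_{\bH^1_{L,mol,N,\ve}}\leq A\|a_j\|_{\bH^1_{L,at,M}}\leq A$ (one may normalize so a single atom has atomic norm at most $1$). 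Expanding each $T(a_j)$ into its own molecular representation and regrouping, one obtains a molecular $(1,2,N,\ve)$-representation of $T(f)$ whose $l^1$ coefficient sum is controlled by $A\sum_j|\la_j|$. The one point requiring care here is that the combined double sum still converges to $T(f)$ in $L^2$; this follows from the $L^2$-boundedness of $T$ together with the fact that molecular representations converge in $L^2$ by definition.

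The key step is then to invoke the norm equivalences recalled just before the lemma. Taking the infimum over atomic representations of $f$ gives $\|T(f)\|_{\bH^1_{L,mol,N,\ve}}\leq A\,\|f\|_{\bH^1_{L,at,M}}$. Since $M,N>n_0/4$ and $\ve>0$, the cited result $\bH^1_{L,at,M}=\bH^1_{L,mol,N,\ve}$ with equivalent norms, together with $H_L^1=H^1_{L,at,M}=H^1_{L,mol,N,\ve}$, upgrades this to a bound of the form $\|T(f)\|_{H_L^1}\leq C\,A\,\|f\|_{H_L^1}$ valid for all $f$ in the dense subspace $\bH^1_{L,at,M}$ of $H_L^1$, with $C$ depending only on the implicit constants in those equivalences.

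Finally I would extend by density and continuity. The subspace $\bH^1_{L,at,M}$ is dense in $H_L^1=H^1_{L,at,M}$ by construction (the latter is the abstract completion of the former), and $T$ is bounded on this dense subspace in the $H_L^1$ norm by the previous paragraph. Hence $T$ extends uniquely to a bounded operator $T^{ext}$ on all of $H_L^1$ with $\|T^{ext}f\|_{H_L^1}\leq C\,A\,\|f\|_{H_L^1}$; uniqueness is automatic from density and the continuity of $T^{ext}$.

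I expect the main obstacle to be the bookkeeping of the $L^2$-convergence of the regrouped double sum and verifying that the diagonal rearrangement of coefficients indeed yields a legitimate molecular representation of $T(f)$ in the precise sense defined above, rather than merely a formal series. In particular one must check that the $L^2$ sum and the molecular expansions are compatible so that no mass is lost under the interchange; once this is confirmed the rest is the standard density-extension argument, and the passage between the abstract completions is handled entirely by the quoted norm equivalences.
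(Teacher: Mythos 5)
Your argument is correct and follows essentially the same route as the paper's proof: restrict to the dense subspace $\bH^1_{L,at,M}$, apply $T$ termwise to a near-optimal atomic representation using $L^2$-boundedness to get a molecular $(1,2,N,\ve)$-representation of $Tf$ with coefficient sum $\lesssim A\sum_j|\la_j|$, invoke the norm equivalences $\bH^1_{L,at,M}=\bH^1_{L,mol,N,\ve}$ and $H^1_L=H^1_{L,at,M}$, and extend by density. The only difference is that you spell out the regrouping of the double sum, which the paper compresses into the single assertion that $Tf=\sum_j\la_jT(a_j)$ is a molecular representation.
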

\begin{proof}
By density of $\bH^1_{L,at,M}$ in $H^1_{L,at,M}=H_L^1$ it is enough to prove that $T$ is bounded from $\bH^1_{L,at,M}$ to $\bH^1_{L,mol,N,\ve}.$

Take $f\in \bH^1_{L,at,M},$ so that  $f=\sum_j \la_j a_j,$ where $a_j$ are $(1,2,M)$ atoms, $\{\la_j\}\in l^1,$ and the sum converges in $L^2.$ We chose $\la_j$ and $a_j$ in a way that $\sum_{j}|\la_j|\leq 2\|f\|_{\bH^1_{L,at,M}}.$ The $L^2$ boundedness of $T$ implies that $Tf=\sum_j \la_j T(a_j)$ is a $(1,2,N,\ve)$ molecular representation of $Tf.$ Therefore,
$$\|Tf\|_{\bH^1_{L,mol,N,\ve}}\leq A\,\sum_{j}|\la_j|\leq 2A\,\|f\|_{\bH^1_{L,at,M}},$$
and the proof is completed.
\end{proof}

  Let $E:=E_{\sqrt{L}}$ be the spectral measure of $\sqrt{L}$ so that
 \begin{equation*}
 Lf=\int_0^\infty \lambda^2 \, dE(\lambda)f.
 \end{equation*}
Then, for a bounded Borel-measurable function $m\colon [0,\infty)\to \mathbb{C}$ the spectral multiplier operator $m(\sqrt{L})$ is given on $L^2(\Omega)$ by
\begin{equation*}
\label{eq:spectmult}
 m(\sqrt{L})f=\int_0^\infty m(\lambda) \, dE(\lambda)f.
 \end{equation*}

Using Lemma \ref{lem:EnoughOnAtoms} with $2M$ in place of $M$ and $N=M>n_0/4$ we deduce the following enhancement of
\cite[Theorem 4.2]{DzPr_Arg}.
 \begin{thm}\label{thm:DzPr}
Assume  that $m$ is  a bounded function defined on $[0,\infty)$ and such
that for  some real number $\alpha > (n_0+1)\slash 2$ and any
nonzero function $\eta\in C_c^\infty (2^{-1}, 2)$ we have
\begin{equation}\label{condition}
 \|m\|_{\eta,\alpha}:=\sup_{t>0} \| \eta (\, \cdot \,) m(t\,\cdot \,)\|_{W^{2
,\alpha}(\mathbb R)}<\infty,
\end{equation}
where $\| F \|_{W^{p,\alpha}(\mathbb R)}=\| (I-d^2\slash
dx^2)^{\alpha \slash 2}F\|_{L^p(\mathbb R)}.$
  Then the operator $m(\sqrt{L})$ extends uniquely to a bounded operator on $H_L^1.$ Moreover, there exists a constant $C>0$ such that
  \begin{equation*}
 \| m(\sqrt{L})f\|_{H^1_{L}}\leq C \|m\|_{\eta,\alpha} \|f\|_{H^1_L}, \qquad f\in H^1_{L}.
  \end{equation*}
\end{thm}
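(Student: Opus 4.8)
The plan is to reduce everything to Lemma \ref{lem:EnoughOnAtoms}, so that only two facts must be verified: $L^2$-boundedness of $m(\sqrt{L})$ and the uniform mapping of atoms to molecules. The first is immediate from the spectral theorem, since $m$ is bounded: $\|m(\sqrt{L})\|_{L^2\to L^2}=\|m\|_{L^\infty}$. Moreover $\|m\|_{L^\infty}\leq C\|m\|_{\eta,\alpha}$, because $\alpha>(n_0+1)/2\geq 1/2$ forces the Sobolev embedding $W^{2,\alpha}(\mathbb R)\hookrightarrow L^\infty(\mathbb R)$, and as $t$ ranges over $(0,\infty)$ the rescaled pieces $\eta(\cdot)m(t\,\cdot)$ recover $m$ on all of $(0,\infty)$. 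Applying the lemma with $2M$ in place of $M$ and $N=M$, it then suffices to produce an $\ve>0$ and a constant $A\leq C\|m\|_{\eta,\alpha}$ such that $m(\sqrt{L})$ carries every $(1,2,2M)$ atom to a multiple (at most $A$) of a $(1,2,M,\ve)$ molecule.

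For the molecular estimate I would fix a $(1,2,2M)$ atom $a=L^{2M}b$ adapted to a ball $B=B_d(y_0,r)$ and use the functional calculus to write $m(\sqrt{L})a=L^M\tilde b$ with $\tilde b=m(\sqrt{L})L^Mb$ (legitimate since $L$ is injective and the operators involved are all functions of $L$). It then remains to bound, for $k=0,\dots,M$ and $j\geq 0$,
\begin{equation*}
\|(r^2L)^k\tilde b\|_{L^2(U_j(B))}=\|(r^2L)^k\,m(\sqrt{L})\,L^Mb\|_{L^2(U_j(B))}\leq C\|m\|_{\eta,\alpha}\,r^{2M}2^{-j\ve}\mu(B_d(y_0,2^jr))^{-1/2}.
\end{equation*}
The near-diagonal indices $j=0,1$ are handled by $L^2$-boundedness together with the normalization of the atom, which yields the factor $r^{2M}\mu(B)^{-1/2}$; the substantive content is the geometric decay $2^{-j\ve}$ for large $j$, that is, the off-diagonal behavior of the smoothed operator $(r^2L)^k m(\sqrt{L})L^M$ applied to a function concentrated on $B$.

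To generate this decay I would represent $m(\sqrt{L})$ through the wave propagator, writing (after a dyadic decomposition of $m$ in frequency against a partition of unity built from $\eta$) each scale-piece as a superposition $\int\widehat{m}(\tau)\cos(\tau\sqrt{L})\,d\tau$, and then invoke the finite speed of propagation that underlies Lemma \ref{lem:Lem48}. Since $a$ is supported in $B$, finite propagation speed keeps $\cos(\tau\sqrt{L})a$ supported in the $\tau$-neighborhood of $B$; this is exactly the geometric consequence of the Davies--Gaffney estimates \eqref{eq:D-G}. The Hörmander condition \eqref{condition}, with $\alpha>(n_0+1)/2$, supplies enough decay of $\widehat{m}(\tau)$ in $\tau$ to integrate against the volume growth \eqref{eq:growth} of $\Omega$ and to convert a spatial separation $d(x,y)\gtrsim 2^jr$ into the gain $2^{-j\ve}$, while the extra $M$ powers of $L$ provide the quantitative smoothing and cancellation at the scale $r$ of $B$.

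The main obstacle is precisely this last quantitative estimate: fusing the Sobolev regularity of $m$ encoded in $\|m\|_{\eta,\alpha}$ with the finite propagation speed to obtain annular bounds that are uniform over all atoms and summable in $j$. This is the technical heart of \cite[Theorem 4.2]{DzPr_Arg}, and I would borrow its argument verbatim. Once the molecular estimate above is established, Lemma \ref{lem:EnoughOnAtoms} immediately yields the unique bounded extension of $m(\sqrt{L})$ to $H^1_L$, together with the norm bound $\|m(\sqrt{L})f\|_{H^1_L}\leq C\|m\|_{\eta,\alpha}\|f\|_{H^1_L}$.
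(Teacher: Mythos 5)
Your proposal follows essentially the same route as the paper: the paper's proof of this theorem is precisely the observation that \cite[Theorem 4.2]{DzPr_Arg} provides the uniform atom-to-molecule estimate (with $2M$ in place of $M$ and $N=M>n_0/4$), after which Lemma \ref{lem:EnoughOnAtoms} yields the unique bounded extension to $H^1_L$. Your additional sketch of how the molecular decay is generated from finite propagation speed and the Sobolev condition \eqref{condition} is consistent with the argument of \cite{DzPr_Arg}, which both you and the paper ultimately invoke rather than reprove.
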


For the convenience of the reader we also restate Lemma 4.8 of \cite{DzPr_Arg}.
 \begin{lem}\label{lem:Lem48}
 Let $\gamma >1\slash 2$, $\beta >0$. Then there exists a constant $C>0$ such that
 for every even function  $F\in W^{2,\gamma
 +\beta\slash 2}(\mathbb R)$ and every  $g\in
 L^2(\Omega)$, $\text{\rm supp}\, g\subset B_d(y_0, r)$,
 we have
  $$ \int_{d(x,y_0)>2r} |F(2^{-j}\sqrt{
  L})g(x)|^2\left(\frac{d(x,y_0)}{r}\right)^\beta d\mu(x)\leq C
  (r2^j)^{-\beta}
  \| F\|_{W^{2,\gamma +\beta\slash 2}}^2\| g\|_{L^2(\Omega)}^2$$
  for $j\in\mathbb Z$.
 \end{lem}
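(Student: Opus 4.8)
The plan is to exploit the equivalence between the Davies--Gaffney estimate \eqref{eq:D-G} and the finite speed of propagation of the wave equation (see \cite{CouSi1}): if $g\in L^2(\Omega)$ is supported in a ball $B_d(y_0,\rho)$, then $\cos(s\sqrt{L})g$ is supported in $B_d(y_0,\rho+|s|)$. Since $F$ is even and lies in $W^{2,\gamma+\beta/2}(\mathbb R)$ with $\gamma+\beta/2>1/2$, its Fourier transform $\hat F$ is integrable, and Fourier inversion yields the operator identity
\[
F(2^{-j}\sqrt{L})=\frac{1}{2\pi}\int_{\mathbb R}\hat F(\tau)\cos(2^{-j}\tau\sqrt{L})\,d\tau,
\]
understood in the strong operator topology, the integral converging because $\|\cos(s\sqrt{L})\|_{L^2\to L^2}\le 1$. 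I would then decompose $\{d(x,y_0)>2r\}$ into the dyadic annuli $A_k=B_d(y_0,2^{k+1}r)\setminus B_d(y_0,2^{k}r)$, $k\ge 1$, on which the weight satisfies $(d(x,y_0)/r)^{\beta}\le 2^{(k+1)\beta}$.

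The key localization step is that on $A_k$ only the high frequencies of $F$ contribute. For $x\in A_k$ we have $d(x,y_0)\ge 2^{k}r$, while $\cos(2^{-j}\tau\sqrt{L})g(x)$ vanishes a.e.\ once $r+2^{-j}|\tau|<2^{k}r$, i.e.\ once $|\tau|<2^{j+k-1}r=:T_k$. Hence, writing $F^{(k)}(\lambda)=\frac{1}{2\pi}\int_{|\tau|\ge T_k}\hat F(\tau)\cos(\tau\lambda)\,d\tau$, we get $F(2^{-j}\sqrt{L})g=F^{(k)}(2^{-j}\sqrt{L})g$ a.e.\ on $A_k$. Extending the integral to all of $\Omega$ and using the spectral theorem,
\[
\int_{A_k}|F(2^{-j}\sqrt{L})g|^2\,d\mu\le \|F^{(k)}(2^{-j}\sqrt{L})g\|_{L^2(\Omega)}^2\le \|F^{(k)}\|_{L^\infty}^2\,\|g\|_{L^2(\Omega)}^2.
\]
The sup norm of the tail is controlled by Cauchy--Schwarz and Plancherel: with $s=\gamma+\beta/2$,
\[
\|F^{(k)}\|_{L^\infty}\le \frac{1}{2\pi}\int_{|\tau|\ge T_k}|\hat F(\tau)|\,d\tau\le C\Big(\int_{|\tau|\ge T_k}(1+\tau^2)^{-s}\,d\tau\Big)^{1/2}\|F\|_{W^{2,s}}\le C(1+T_k)^{(1-2s)/2}\|F\|_{W^{2,s}},
\]
where $2s=2\gamma+\beta>1$ guarantees convergence of the integral.

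Collecting the annuli reduces everything to a dyadic sum:
\[
\int_{d(x,y_0)>2r}|F(2^{-j}\sqrt{L})g|^2\Big(\tfrac{d(x,y_0)}{r}\Big)^{\beta}d\mu\le C\|F\|_{W^{2,s}}^2\|g\|_{L^2}^2\sum_{k\ge1}2^{k\beta}\big(1+2^{k}r2^{j}\big)^{1-2\gamma-\beta},
\]
so the whole matter comes down to showing $S(u):=\sum_{k\ge1}2^{k\beta}(1+2^{k}u)^{1-2\gamma-\beta}\le Cu^{-\beta}$ with $u=r2^{j}$, which I expect to be the main obstacle. I would handle it by splitting at the index $k_0$ with $2^{k_0}u\approx 1$: for $k\le k_0$ the factor $(1+2^ku)^{1-2\gamma-\beta}$ is comparable to a constant and $\sum_{k\le k_0}2^{k\beta}\approx 2^{k_0\beta}\approx u^{-\beta}$ (here $\beta>0$ is used to dominate the geometric sum by its top term), while for $k>k_0$ one has $(1+2^ku)^{1-2\gamma-\beta}\approx (2^ku)^{1-2\gamma-\beta}$, so that range contributes $u^{1-2\gamma-\beta}\sum_{k>k_0}2^{k(1-2\gamma)}\approx u^{1-2\gamma-\beta}2^{k_0(1-2\gamma)}\approx u^{-\beta}$, the series converging precisely because $\gamma>1/2$. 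When $u\ge1$ the first range is empty and the second alone gives $u^{1-2\gamma-\beta}\le u^{-\beta}$. Thus $S(u)\le Cu^{-\beta}$, yielding the claimed estimate. The delicate points are exactly this two-regime summation with its exponent bookkeeping, together with the verification—via finite speed propagation—that replacing $F$ by its high-frequency truncation $F^{(k)}$ on each annulus is legitimate.
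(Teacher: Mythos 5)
Your argument is correct and is essentially the proof of \cite[Lemma 4.8]{DzPr_Arg} that the paper merely restates: Fourier--cosine inversion of the even function $F$, finite speed of propagation (equivalent to \eqref{eq:D-G}) to replace $F$ by its high-frequency truncation $F^{(k)}$ on the $k$-th dyadic annulus, Cauchy--Schwarz plus Plancherel on the Fourier tail, and the two-regime summation in $k$ using $\gamma>1/2$ and $\beta>0$. The only detail worth making explicit is that the propagation speed inherited from the constant $c$ in \eqref{eq:D-G} need not be $1$, which merely rescales $T_k$ by a harmless multiplicative constant.
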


Summarizing this section, we may use whichever of the spaces $H^1_{L,at,M}$ or $H^1_{L,mol,M},$  $M> n_0/4,$ that is convenient.

\section{The results}
We are going to study strong $H^1_L$ convergence of operators of the form $m(tL)$ as $t\to 0$. Observe that for the strong $L^2$ convergence it is enough to assume that $m$ is bounded and continuous at $0.$ Our first main result is the following theorem.
 \begin{thm}
\label{thm:dilgen}
 Take $\kappa$ an integer larger than $(n_0+1)\slash 2.$ Let $m\colon [0,\infty)\to \mathbb{C}$ be a continuous function which is $C^{\kappa}$ on $(0,\infty).$ Assume that $m$ satisfies the Mikhlin condition of order $\kappa,$ i.e.
 \begin{equation}
 \label{eq:Mikh_cond}
 \sup_{0\leq j\leq \kappa}\sup_{\la>0}|\la^j m^{(j)}(\la)|<\infty,
 \end{equation}
 and, additionally
  \begin{equation}
 \label{eq:Mikh_lim0}
 \lim_{\la\to 0^+}\la^j m^{(j)}(\la)=0,\qquad j=1,\ldots,\kappa.
 \end{equation}
Then, we have the following strong $H_L^1$ convergence,
\begin{equation}\label{stronconv}
 \lim_{t\to 0^+}m(t\sqrt{L}) f= m(0)f,\qquad \textrm{for every }f\in H_L^1.
 \end{equation}

\end{thm}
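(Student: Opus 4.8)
The plan is to deduce the strong $H^1_L$ convergence from two ingredients: a uniform bound $\sup_{t>0}\|m(t\sqrt L)\|_{H^1_L\to H^1_L}<\infty$ and convergence on a dense subclass. Replacing $m$ by $m-m(0)$—which affects neither \eqref{eq:Mikh_cond} nor \eqref{eq:Mikh_lim0} and preserves boundedness—I may assume $m(0)=0$, so that \eqref{stronconv} becomes $m(t\sqrt L)f\to 0$. For the uniform bound I would invoke Theorem \ref{thm:DzPr}: with $m_t(\lambda):=m(t\lambda)$ one has $\lambda^jm_t^{(j)}(\lambda)=(t\lambda)^jm^{(j)}(t\lambda)$, so the scale-invariant norm is dilation invariant, $\|m_t\|_{\eta,\alpha}=\|m\|_{\eta,\alpha}$, and it is finite because the Mikhlin bound \eqref{eq:Mikh_cond} of order $\kappa>(n_0+1)/2$ controls the $W^{2,\alpha}$ norms in \eqref{condition} for some $\alpha\in((n_0+1)/2,\kappa]$. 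Hence $\{m(t\sqrt L)\}_{t>0}$ is uniformly bounded on $H^1_L$, and it remains to prove $m(t\sqrt L)f\to 0$ for $f$ in a dense subset.

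As that dense subset I would take the frequency-localized class $\mathcal{D}=\{\varphi(\sqrt L)g\colon \varphi\in C_c^\infty(0,\infty),\ g\in L^2\cap H^1_L\}$, noting $\varphi(\sqrt L)g\in H^1_L$ by Theorem \ref{thm:DzPr}. For $f=\varphi(\sqrt L)g$ one has $m(t\sqrt L)f=F_t(\sqrt L)g$ with $F_t=m(t\cdot)\varphi$, and the whole point is that $\|F_t\|_{\eta,\alpha}\to 0$ as $t\to 0^+$. Indeed, since $\varphi$ is supported away from $0$ and $\infty$, on the dilates entering \eqref{condition} the function $m$ is evaluated only at arguments tending to $0$ with $t$; the sup-norm then vanishes by continuity of $m$ at $0$, while the higher derivatives are handled through $\frac{d^j}{d\lambda^j}m(t\lambda)=\lambda^{-j}(t\lambda)^jm^{(j)}(t\lambda)$, which tends to $0$ by \eqref{eq:Mikh_lim0}. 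Theorem \ref{thm:DzPr} then gives $\|m(t\sqrt L)f\|_{H^1_L}\le C\|F_t\|_{\eta,\alpha}\|g\|_{H^1_L}\to 0$. I want to stress that continuity of $m$ at $0$ alone would control only the sup-norm, hence only the $L^2$ convergence; it is precisely the vanishing condition \eqref{eq:Mikh_lim0} that forces the higher Sobolev seminorms of $F_t$ to $0$ and thereby upgrades the convergence to $H^1_L$.

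The main obstacle is to prove that $\mathcal{D}$ is dense in $H^1_L$. I would fix smooth cutoffs $\varphi_n\in C_c^\infty(0,\infty)$ with $\varphi_n\equiv 1$ on $[1/n,n]$ and $\supp\varphi_n\subset[1/(2n),2n]$, of logarithmic (dilation) type so that their Mikhlin constants of order $\kappa$ are bounded uniformly in $n$; then $\{\varphi_n(\sqrt L)\}$ is uniformly bounded on $H^1_L$ by Theorem \ref{thm:DzPr}. Since finite atomic combinations are dense, it suffices to show $\varphi_n(\sqrt L)a\to a$ in $H^1_L$ for a single $(1,2,M)$-atom $a=L^Mb$ associated with a ball $B=B_d(y_0,r)$. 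I would split $1-\varphi_n=\chi_n^{\mathrm{low}}+\chi_n^{\mathrm{high}}$ with $\chi_n^{\mathrm{low}}$ supported in $[0,1/n]$ and $\chi_n^{\mathrm{high}}$ in $[n,\infty)$. The coarse part $\chi_n^{\mathrm{low}}(\sqrt L)a=(\lambda^{2M}\chi_n^{\mathrm{low}})(\sqrt L)b$ gains the $L^2$-factor $(r/n)^{2M}$ because $L^M$ annihilates low frequencies, and by Lemma \ref{lem:Lem48} it is essentially supported, with rapid off-ball decay, in the coarse ball $B_d(y_0,n)$; repackaging it as a single molecule at scale $n$ costs the volume factor $(\mu(B_d(y_0,n))/\mu(B))^{1/2}\lesssim(n/r)^{n_0/2}$ by \eqref{eq:growth}, so its $H^1_L$-norm is $\lesssim (r/n)^{2M-n_0/2}\to 0$—and this is exactly the step where the standing hypothesis $M>n_0/4$ is used.

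The complementary fine part $\chi_n^{\mathrm{high}}(\sqrt L)a$ is small in $L^2$, with rate $n^{-2M}$, once one works with atoms of order $2M$, as permitted by Lemma \ref{lem:EnoughOnAtoms}, so that $a$ carries the extra spectral decay $\int_{\lambda\ge n}d\|E(\lambda)a\|^2\lesssim n^{-4M}$; yet it oscillates at the fine scale $1/n$ while remaining spread over the fixed ball $B$, so it is not a single molecule at any one scale. I would therefore decompose it, using a cover of $B_d(y_0,2r)$ by balls of radius $1/n$ together with the off-ball decay of Lemma \ref{lem:Lem48}, into fine-scale molecules (the primitive structure $L^M(\,\cdot\,)$ surviving because $\chi_n^{\mathrm{high}}(\sqrt L)$ commutes with $L$); by the doubling property \eqref{eq:doubling} the number of balls times the per-ball normalization telescopes, so the molecular coefficients sum to a fixed, atom-dependent multiple of the $L^2$-mass of the tail, and hence to $0$. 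Reconciling this $L^2$-smallness with the rigid molecular $H^1_L$-structure is the genuinely delicate point of the whole proof. Granting both tail estimates, $\varphi_n(\sqrt L)a\to a$ and thus $\mathcal{D}$ is dense; an $\varepsilon/3$ argument combining this density with the convergence of the second paragraph and the uniform bound of the first then yields \eqref{stronconv}.
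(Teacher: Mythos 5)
Your first two paragraphs are sound and agree in spirit with the paper: the uniform bound via Theorem \ref{thm:DzPr} and dilation invariance of \eqref{condition} is exactly the paper's first step, and your observation that \eqref{eq:Mikh_lim0} is what drives the Sobolev norms (not just the sup norm) to zero is the right mechanism. The divergence, and the gap, is in your choice of dense subclass. The paper reduces to $(1,2,2M)$-atoms and shows \emph{directly} that $(m(t\sqrt L)-m(0))a$ is an $o(1)$ multiple of a single $(1,2,M,\varepsilon)$-molecule at the scale of the original ball, by splitting the multiplier dyadically: the piece $\Psi_{\ell_0}(\lambda)(m(t|\lambda|)-m(0))\lambda^{2M}$ has $W^{2,\alpha}$-norm tending to $0$ by \eqref{eq:Mikh_lim0}, the piece $\sum_{\ell\ge\ell_0}$ is controlled by $2^{-\ell_0\beta/2}$ uniformly in $t$, and the local part on $2B$ is handled by the spectral theorem. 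You instead reduce to the class $\mathcal{D}=\{\varphi(\sqrt L)g\}$, where convergence is easy, and push all the difficulty into proving that $\mathcal{D}$ is dense in $H^1_L$. That density claim is where your argument breaks.

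Concretely, two things fail in your treatment of the high-frequency tail $\chi_n^{\mathrm{high}}(\sqrt L)a$. First, the asserted spectral decay $\int_{\lambda\ge n}dE_{a,a}(\lambda)\lesssim n^{-4M}$ is false: for a $(1,2,2M)$-atom $a=L^{2M}b$ one controls $\|L^kb\|_{L^2}$ only for $k\le 2M$, i.e.\ $\int\lambda^{8M}dE_{b,b}\le 1$, which gives $\int_{\lambda\ge n}\lambda^{8M}dE_{b,b}=o(1)$ but no rate; a rate $n^{-4M}$ would require moments $\|L^{3M}b\|_{L^2}$ that an atom does not provide. Second, and more seriously, restricting $\chi_n^{\mathrm{high}}(\sqrt L)a$ to a cover of $2B$ by balls of radius $1/n$ does not produce molecules: a $(1,2,N,\varepsilon)$-molecule is not merely an $L^2$-normalized, localized function, it must be $L^N\tilde b$ with bounds on \emph{all} $\|(r^2L)^k\tilde b\|_{L^2(U_j(B_i))}$, $k=0,\dots,N$, and a sharp spatial cutoff destroys this structure (there is no Leibniz rule for the abstract $L$, and $L$ does not commute with multiplication by cutoffs). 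Lemma \ref{lem:Lem48} only yields decay \emph{outside} the large ball supporting $b$; it gives no localization at scale $1/n$ inside $B$. Note finally that $\chi_n^{\mathrm{high}}(\sqrt L)a\to 0$ in $H^1_L$ is itself the special case $m=\chi^{\mathrm{high}}$, $t=1/n$ of the theorem you are proving, so your reduction is essentially circular unless this step is proved by independent means --- which is precisely what the paper's dyadic-decomposition argument on atoms accomplishes.
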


\begin{remark}
Straightforward modifications in the proof we present below give a slightly stronger version of the theorem, with the assumption \eqref{eq:Mikh_cond} replaced by \eqref{condition} for some real number $\alpha$ larger than $(n_0+1)/2.$
\end{remark}

Before proceeding to the proof let us note the following important corollary.
\begin{cor}
\label{cor:StrHeat}
Both the heat semigroup $e^{-tL}$ and the Poisson semigroup $e^{-t\sqrt{L}}$ are strongly continuous on $H^1_L.$
\end{cor}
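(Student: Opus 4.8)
The plan is to deduce both statements from Theorem \ref{thm:dilgen} applied to two fixed profiles. For the Poisson semigroup I take $m(\lambda)=e^{-\lambda}$, so that $m(t\sqrt{L})=e^{-t\sqrt{L}}$ and $m(0)=1$. For the heat semigroup I reparametrize by $t=s^2$ and take $m(\lambda)=e^{-\lambda^2}$, so that $m(s\sqrt{L})=e^{-s^2L}=e^{-tL}$ and again $m(0)=1$, the limit $t\to 0^+$ corresponding to $s\to 0^+$. In both cases $m$ is continuous on $[0,\infty)$, smooth on $(0,\infty)$, and every derivative $m^{(j)}$ is a polynomial times a rapidly decaying factor ($e^{-\lambda}$, resp.\ $e^{-\lambda^2}$). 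Hence $\lambda^j m^{(j)}(\lambda)$ is bounded on $(0,\infty)$, which is the Mikhlin condition \eqref{eq:Mikh_cond}, and $\lambda^j m^{(j)}(\lambda)\to 0$ as $\lambda\to 0^+$ for $j\geq 1$, since the polynomial factor is continuous at $0$ while $\lambda^j\to 0$, which is \eqref{eq:Mikh_lim0}. Theorem \ref{thm:dilgen} then gives $\lim_{t\to 0^+}e^{-t\sqrt{L}}f=f$ and $\lim_{t\to 0^+}e^{-tL}f=f$ for every $f\in H^1_L$, that is, strong continuity at $t=0$.

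Strong continuity requires continuity of $t\mapsto T_tf$ at every $t_0>0$ as well, and for this I would first record that the two families are bounded on $H^1_L$ uniformly in $t$. This follows from Theorem \ref{thm:DzPr} once one observes that the norm $\|\cdot\|_{\eta,\alpha}$ of \eqref{condition} is invariant under dilation of the argument: writing $e^{-t\lambda}=m_1(t\lambda)$ and $e^{-t\lambda^2}=M_1(\sqrt{t}\,\lambda)$ with the fixed profiles $m_1(\lambda)=e^{-\lambda}$ and $M_1(\lambda)=e^{-\lambda^2}$, the substitution $v=t u$ (resp.\ $v=\sqrt{t}\,u$) in $\sup_{u>0}\|\eta(\cdot)m_1(t u\cdot)\|_{W^{2,\alpha}}$ shows $\|e^{-t\lambda}\|_{\eta,\alpha}=\|m_1\|_{\eta,\alpha}$ and $\|e^{-t\lambda^2}\|_{\eta,\alpha}=\|M_1\|_{\eta,\alpha}$ for all $t>0$. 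Both are finite by the Mikhlin bounds verified above (the Remark following Theorem \ref{thm:dilgen} records that \eqref{eq:Mikh_cond} implies \eqref{condition}). Consequently there is a constant $C$ with $\|e^{-t\sqrt{L}}\|_{H^1_L\to H^1_L}\leq C$ and $\|e^{-tL}\|_{H^1_L\to H^1_L}\leq C$ for every $t>0$.

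With uniform boundedness in hand, the passage from continuity at $0$ to continuity at an arbitrary $t_0>0$ is the standard semigroup argument, carried out identically for $T_t=e^{-t\sqrt{L}}$ and for $T_t=e^{-tL}$. For $h>0$ the semigroup law gives $T_{t_0+h}f-T_{t_0}f=T_{t_0}(T_hf-f)$, whose $H^1_L$ norm tends to $0$ because $T_{t_0}$ is bounded and $T_hf\to f$. For $0<h<t_0$ one writes $T_{t_0-h}f-T_{t_0}f=T_{t_0-h}(f-T_hf)$ and bounds its norm by $C\|f-T_hf\|_{H^1_L}\to 0$, using the uniform bound on $\{T_s\}_{s>0}$. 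Hence $t\mapsto T_tf$ is continuous on $[0,\infty)$, which is the asserted strong continuity. The only point needing care beyond the direct application of Theorem \ref{thm:dilgen} is this uniform boundedness, and it is precisely the scale invariance of the multiplier norm that supplies it, so that no genuinely new estimate is required.
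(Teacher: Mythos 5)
Your proposal is correct and follows exactly the route the paper intends: the corollary is stated as an immediate consequence of Theorem \ref{thm:dilgen} applied to the profiles $m(\lambda)=e^{-\lambda}$ and $m(\lambda)=e^{-\lambda^2}$ (the latter with the substitution $t=s^2$), both of which visibly satisfy \eqref{eq:Mikh_cond} and \eqref{eq:Mikh_lim0}. Your additional verification of continuity at every $t_0>0$ via the dilation invariance of $\|\cdot\|_{\eta,\alpha}$, Theorem \ref{thm:DzPr}, and the standard semigroup identity is a correct and welcome elaboration of a step the paper leaves implicit.
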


\begin{proof}[Proof of Theorem \ref{thm:dilgen}]
 Let $M$ be an integer such that $2M\geq \kappa$. Then $M>n_0\slash 4$. From Theorem \ref{thm:DzPr} and the dilation invariance of \eqref{condition} it follows that $m(tL)$ is well-defined and bounded on $H_L^1,$ uniformly in $t>0.$ Therefore it is enough to prove \eqref{stronconv} for $f\in \bH^1_{L,at,2M}.$

We claim that we can further reduce the proof to demonstrating that
\begin{equation}
\label{stronconvat}
\lim_{t\to 0^+}\|m(t\sqrt{L})a-m(0)a\|_{H_L^1}=0,\qquad \textrm{for $a$ being a $(1,2,2M)$-atom}.
\end{equation}
Indeed, if \eqref{stronconvat} is true, and $f=\sum_{j} \la_j a_j$ (where $\{\la_j\}\in l^1$ and the sum defining $f$ converges also in $L^2$) then we obtain
\begin{align*}
\|[m(t\sqrt{L})-m(0)](f)\|_{H_L^1}= \big\|\sum_{j=0}^{\infty}\la_j[m(t\sqrt{L})-m(0)](a_j)\big\|_{H_L^1}\leq \sum_{j=0}^{\infty}|\la_j|\|[m(t\sqrt{L})-m(0)](a_j)\|_{H_L^1}.
\end{align*}
Now, from Theorem \ref{thm:DzPr} it follows that $\|[m(t\sqrt{L})-m(0)](a_j)\|_{H_L^1}$ is uniformly bounded in $t.$ Therefore, thanks to  \eqref{stronconvat} we obtain $\lim_{t\to 0^+} \|[m(t\sqrt{L})-m(0)](f)\|_{H_L^1}=0,$ as desired.

To prove \eqref{stronconvat} we will show that there is an $\varepsilon >0$ such that for every $a$ being a $(1,2,2M)$-atom the function $(m(t\sqrt{L})-m(0)) a$ is a multiple of a $(1,2,M,\varepsilon)$ molecule and the multiple constant tends to $0$ as $t\to0$.  Note that the rate of convergence may well depend on $a$ for our purposes. There is no loss of generality if we assume that the associated ball $B$ has radius 1, that is $B=B(y_0,1)$ for certain $y_0\in\Omega$. This means that $a=L^{2M}b$ where $b\in \mathcal D(L^{2M})$, $\supp L^k b\subset B,$ and $\| L^kb\|_{L^2}\leq 1$ for $k=0,1,...,2M$. Then, denoting $\tilde b=[m(t\sqrt{L})-m(0)]L^M b,$ we have $[m(t\sqrt{L})-m(0)] a=L^M \tilde b$. Our task is to study the behavior of $L^2$-norms of $L^k \tilde b=[m(t\sqrt{L})-m(0)]L^{k+M}b$, $k=0,1,...,M$,  on the sets $U_j(B)$.

Let $\psi\in C_c^\infty (\frac{1}{2},2)$ be such that $\sum_{\ell \in\mathbb Z}\psi (2^{-\ell}\lambda)=1$ for $\lambda >0$. For $\ell_0\in \mathbb Z$ and $\lambda\in \mathbb R$ set $\Psi_{\ell_0}(\lambda)=1-\sum_{\ell=\ell_0}^\infty \psi (2^{-\ell}|\lambda|)$. We split $$m(t|\la|)-m(0)=\Psi_{\ell_0} (\lambda) (m(t|\la|)-m(0))+\sum_{\ell=\ell_0}^\infty \psi(2^{-\ell}|\lambda|)(m(t|\la|)-m(0))$$
and for $\lambda\in\mathbb R$  put
$$m_{\ell,t} (\lambda)=\psi(2^{-\ell}|\lambda|)(m(t|\la|)-m(0)), \ \ \tilde m_{\ell,t} (\lambda)=m_{\ell,t} (2^{\ell} \lambda)=\psi(|\lambda|)(m(t2^{\ell}|\lambda|)-m(0)).$$

Fix $\ve>0$ and $\gamma>1/2$ such that $\gamma+\ve+n_0/2=\alpha.$  Set $\beta=n_0+2\ve,$ so that $\gamma+\beta/2=\alpha.$
Recall that $\supp L^{k+M} b\subset B$ and $m_{\ell,t} (\lambda)=m_{\ell,t} (-\lambda)$. Applying Lemma \ref{lem:Lem48} we have
$$ \int_{d(x,y_0)>2}| m_{\ell, t}(\sqrt{L})L^{k+M}b(x)|^2d(x,y_0)^\beta \, d\mu (x)\leq C 2^{-\ell \beta}\| \tilde m_{\ell, t}\|_{W^{2,\alpha}}\| L^{k+M}b\|_{L^2}^2,$$
 hence, using \eqref{eq:Mikh_cond} we arrive at
$$  \int_{U_j(B)}| m_{\ell, t}(\sqrt{L})L^{k+M}b(x)|^2 \, d\mu (x)\leq C_{\alpha} 2^{-\ell \beta}2^{-j\beta}\| L^{k+M}b\|_{L^2}^2.$$
Therefore
\begin{equation}
\label{eq:nearinf}
\begin{split}
\Big(\int_{U_j(B)} \Big|\sum_{\ell >\ell_0} m_{\ell, t}(\sqrt{L}) L^{k+M}b|^2\, d\mu \Big)^{1\slash 2}\leq  C_{\alpha}^{1\slash 2} 2^{-j\beta\slash 2} 2^{-\ell_0 \beta\slash 2} \| L^{k+M}b\|_{L^2}.
\end{split}\end{equation}
Note that the estimate above does not depend on $t>0$. For the rest of the proof we fix $\ell_0$ large enough.

Denote $n_{\ell_0, t}(\lambda)=\Psi_{\ell_0} (\lambda)  (m(t|\la|)-m(0))\la^{2M}$,  $\lambda\in\mathbb R$. Clearly, $n_{\ell_0, t}(\lambda)=n_{\ell_0, t}(-\lambda)$. Using Lemma \ref{lem:Lem48} we get
$$ \int_{d(x,y_0)>2} |L^{k}n_{\ell_0,t}(\sqrt{L}) b(x)|^2 d(x,y_0)^\beta d\mu  (x)\leq C\| n_{\ell_0 ,t}\|_{W^{2,\gamma+\beta\slash 2}}^2 \| L^{k}b\|_{L^2}^2 = C\| n_{\ell_0 ,t}\|_{W^{2,\alpha}}^2 \| L^{k}b\|_{L^2}^2,$$
and, consequently,
\begin{equation}
\label{eq:nearzer}
\int_{U_j(B)} |L^{k}n_{\ell_0,t}(\sqrt{L}) b(x)|^2\, d\mu (x)\leq C 2^{-\beta j} \| n_{\ell_0 ,t}\|^2_{W^{2,\alpha}} \| L^{k}b\|_{L^2}^2.\end{equation}
 We claim that $n_{\ell_0, t}(\lambda)=\Psi_{\ell_0} (\lambda)  (m(t|\la|)-m(0))\la^{2M}$ satisfies $\lim_{t\to 0^+}\| n_{\ell_0 ,t}\|_{W^{2,\alpha}}=0$. Indeed
\begin{align*}
\| n_{\ell_0 ,t}\|_{W^{2,\alpha}}&\leq \| n_{\ell_0 ,t}\|_{W^{2,{\kappa}}}\approx \|n_{\ell_0 ,t}\|_{L^2}+\| (n_{\ell_0 ,t})^{(\kappa)}\|_{L^2}\lesssim C_{l_0}\|(m(t|\la|)-m(0))\la^{2M}\|_{C^{\kappa}[0,2^{l_0+1}]},
\end{align*}
and, because of \eqref{eq:Mikh_lim0}, the quantity on the right hand side of the above inequality approaches $0$ as $t\to 0^+.$
Summarizing \eqref{eq:nearinf} and \eqref{eq:nearzer} we have proved that, for $k=0,\ldots,M,$ it holds
 \begin{equation*}
 \begin{split}
 \int_{U_j(B)} |L^{k+M} [m(t\sqrt{L})-m(0)]b(x)|^2 \, d\mu (x)
 &\leq C 2^{-j\beta} (2^{-\ell_0\beta}\| L^{k+M}b\|_{L^2}^2+ \| n_{\ell_0 ,t}\|_{W^{2,\alpha}}^2 \| L^kb\|_{L^2}^2)\\
 & \leq C 2^{-j\beta} (2^{-\ell_0\beta}+ \| n_{\ell_0 ,t}\|_{W^{2,\alpha}}^2 )\mu(B(y_0,1))^{-1}\\
 & \leq C 2^{-j\beta} (2^{-\ell_0\beta}+ \| n_{\ell_0 ,t}\|_{W^{2,\alpha}}^2 )\frac{\mu(B(y_0,2^j))}{\mu(B(y_0,1))}\mu(B(y_0,2^j))^{-1}.\\
 \end{split}\end{equation*}
Using \eqref{eq:growth} with $q=n_0+\varepsilon$ we obtain
 \begin{equation*}
 \begin{split}
 \int_{U_j(B)} |L^{k+M} [m(t\sqrt{L})-m(0)]b(x)|^2 \, d\mu (x)
  & \leq C 2^{-j\beta} (2^{-\ell_0\beta}+ \| n_{\ell_0 ,t}\|_{W^{2,\alpha}}^2 )2^{jq} \mu(B(y_0,2^j))^{-1},
  \end{split}\end{equation*}
which is enough for our purpose, since  $\beta- q=\ve,$ $\gamma+\beta\slash 2=\alpha,$ and $\lim_{t\to 0^+}\| n_{\ell_0 ,t}\|_{W^{2,\alpha}}^2 =0$.

To estimate $L^{k+M}(m(t\sqrt{L})-m(0))b$ on $2B$, we note that by the spectral theorem,
\begin{align*}
\| [m(t\sqrt{L})-m(0)] L^{k+M}b\|_{L^2(2B)}^2 &\leq \| [m(t\sqrt{L})-m(0)] L^{k+M}b\|_{L^2(\Omega)}^2\\
&=\int_0^\infty |m(t\la)-m(0)|^2 dE_{L^{k+M}b,L^{k+M}b}(\lambda)\to 0 \ \ \text{as } t\to 0
\end{align*}
thanks to  the Lebesgue dominated convergence theorem and the continuity of $m$ at $0$.
\end{proof}

We finish the paper with showing the strong convergence of the group of imaginary powers. This is achieved by using Theorems \ref{thm:DzPr} and \ref{thm:dilgen}.
\begin{cor}
 \label{cor:StrIma}
 Let $f\in H^1_L.$ Then $\lim_{u\to 0}L^{iu}f=f,$ the limit being in $H^1_L.$
\end{cor}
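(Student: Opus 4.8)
The plan is to deduce the result from the multiplier theorem (Theorem \ref{thm:DzPr}) together with the method of Theorem \ref{thm:dilgen}. First I would record that the symbol $m_u(\la)=\la^{2iu}$, which represents $L^{iu}=m_u(\sqrt{L})$, satisfies \eqref{condition} with norm bounded uniformly for $u$ in a bounded set; indeed $\|m_u\|_{\eta,\alpha}=\sup_{t>0}\|\eta(\cdot)(t\,\cdot)^{2iu}\|_{W^{2,\alpha}}=\|\eta(\cdot)(\cdot)^{2iu}\|_{W^{2,\alpha}}$ because $|t^{2iu}|=1$, and this is finite and bounded for $|u|\le 1$. Hence Theorem \ref{thm:DzPr} yields $\sup_{|u|\le 1}\|L^{iu}\|_{H^1_L\to H^1_L}<\infty$. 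Combining this uniform bound with the density of finite combinations of $(1,2,2M)$-atoms in $H^1_L$, a standard approximation argument reduces the claim to showing that $\|(L^{iu}-I)a\|_{H^1_L}\to 0$ as $u\to 0$ for every fixed $(1,2,2M)$-atom $a$, where I fix $M$ large enough that $2M>\kappa$ (note $2M>\kappa>(n_0+1)/2$ forces $M>n_0/4$).

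Second, for a fixed atom $a=L^{2M}b$ with $L^kb$ supported in $B=B_d(y_0,1)$, I would run the proof of Theorem \ref{thm:dilgen} almost verbatim, with the symbol $m(t\,|\la|)-m(0)$ replaced throughout by $|\la|^{2iu}-1$, to show that $(L^{iu}-I)a=L^M\tilde b$ is a multiple of a $(1,2,M,\ve)$-molecule whose size tends to $0$. Splitting $\la^{2iu}-1=\Psi_{\ell_0}(\la)(\la^{2iu}-1)+\sum_{\ell>\ell_0}\psi(2^{-\ell}|\la|)(\la^{2iu}-1)$ as in that proof and invoking Lemma \ref{lem:Lem48}, the high-frequency tail is controlled \emph{uniformly in $u$}: the rescaled pieces $\tilde m_{\ell,u}(\la)=\psi(|\la|)(2^{2i\ell u}|\la|^{2iu}-1)$ have $W^{2,\alpha}$-norms bounded independently of $\ell$ and of $|u|\le 1$, so their contribution to the molecular norm is $\le C2^{-\ell_0\beta/2}$, which is small once $\ell_0$ is large. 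For the low-frequency part I would, exactly as there, absorb the factor $\la^{2M}$ and reduce the estimate (through Lemma \ref{lem:Lem48}) to $\|(\,|\la|^{2iu}-1)\la^{2M}\|_{C^{\kappa}[0,2^{\ell_0+1}]}$; finally the part of $L^{k+M}(L^{iu}-I)b$ living on $2B$ is handled by the spectral theorem and dominated convergence, since $|\la^{2iu}-1|\le 2$ and $\la^{2iu}-1\to 0$ pointwise.

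The decisive point, and the only genuine departure from Theorem \ref{thm:dilgen}, is the low-frequency quantity $\|(\,|\la|^{2iu}-1)\la^{2M}\|_{C^{\kappa}[0,2^{\ell_0+1}]}$. Here I cannot invoke continuity of the symbol at $0$, as in \eqref{eq:Mikh_lim0}, because $\la^{2iu}=e^{2iu\log\la}$ has \emph{no} limit as $\la\to 0^+$; in fact $\sup_{\la>0}|\la^{2iu}-1|=2$ for every $u\neq 0$. This is the main obstacle, and it is overcome by the extra cancellation of the atom: since $2M>\kappa$, every term arising from differentiating $(\,|\la|^{2iu}-1)\la^{2M}$ up to order $\kappa$ either carries a positive power $\la^{2M-\kappa}\ge\la$ (when the derivatives fall on $\la^{2M}$) or a factor $u$ (when they fall on $\la^{2iu}$); splitting $[0,2^{\ell_0+1}]$ into a small neighbourhood of $0$ and its complement then shows this $C^{\kappa}$-norm tends to $0$ as $u\to 0$. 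Choosing $\ell_0$ first to make the high-frequency tail small uniformly in $u$, and only afterwards letting $u\to 0$ to kill the low-frequency term, I conclude that $(L^{iu}-I)a$ is a molecule with multiple tending to $0$; by Lemma \ref{lem:EnoughOnAtoms} this gives $\|(L^{iu}-I)a\|_{H^1_L}\to 0$, which together with the reduction of the first paragraph completes the proof.
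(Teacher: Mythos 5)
Your argument is correct, but it follows a genuinely different route from the paper's. The paper does not re-run the molecular machinery of Theorem \ref{thm:dilgen} on the symbol $\la^{2iu}-1$. Instead it first inserts a smooth spectral cutoff: writing $\phi$ for a smooth function equal to $1$ on $[0,2]$ and vanishing beyond $4$, it uses Theorem \ref{thm:dilgen} to get $\phi(sL)f\to f$ in $H^1_L$, and then, for fixed $s$, reduces (by the same uniform bound \eqref{eq:UniBound} and density of atoms that you use) to the single symbol estimate $\lim_{u\to 0}\sup_{t>0}\|\eta(\cdot)\,m_u(t\cdot)\|_{W^{2,M-1}}=0$ for $m_u(\la)=\la^{2M}(\la^{2iu}-1)\phi(s\la^2)$, after which Theorem \ref{thm:DzPr} finishes the proof in one line. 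Both proofs hinge on exactly the same cancellation you identify as ``the decisive point'': the factor $\la^{2M}$ supplied by the atom ($f=L^{2M}b$, $a=L^Mb$, $f=L^Ma$) tames the oscillation of $\la^{2iu}$ near $\la=0$, where the symbol has no limit. The difference is at high frequencies: the paper's cutoff $\phi(s\la^2)$ makes the symbol compactly supported so that the Hörmander norm itself tends to zero and the multiplier theorem applies as a black box, whereas you control the tail $\sum_{\ell>\ell_0}$ uniformly in $u$ via Lemma \ref{lem:Lem48} and only then send $u\to 0$ in the low-frequency and $2B$ pieces. Your version is longer but self-contained and shows the cutoff is not needed; the paper's is more economical given that Theorems \ref{thm:DzPr} and \ref{thm:dilgen} are already available. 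Two small points you should make explicit: (i) in the dominated-convergence step on $2B$ the pointwise convergence $\la^{2iu}-1\to 0$ fails at $\la=0$, so you need $E_{\sqrt L}(\{0\})=0$, which is where the standing injectivity assumption on $L$ enters (the paper's route avoids this entirely); (ii) the phrase ``carries a positive power $\la^{2M-\kappa}\ge\la$'' should read that the term vanishes at least like $\la^{2M-\kappa}$ with $2M-\kappa\ge 1$ as $\la\to 0^+$, which is what your splitting of $[0,2^{\ell_0+1}]$ actually uses.
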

\begin{proof}
Let $\phi(\la)$ be a smooth function on $[0,\infty)$ which is equal to $1$ on $[0,2]$ and vanishes for $\la>4.$

Theorem \ref{thm:DzPr} implies
\begin{equation}
\label{eq:UniBound}
 \sup_{|u|\leq 1}\|L^{iu}\|_{H^1_L\to H^1_L}<\infty.
\end{equation}
Moreover, from Theorem \ref{thm:dilgen} it follows that $\lim_{s\to 0^+} \phi(sL)f=f,$ for $f\in H_L^1$ (the limit being in $H_L^1$).
Hence, a density argument together with \eqref{eq:UniBound} show that it is enough to justify that for each fixed $s>0,$ we have
\begin{equation}
\label{eq:ImaConvExp}
 \lim_{u\to 0}(L^{iu}-1)\phi(sL)f=0,\qquad f\in H^1_L,
\end{equation}
the limit being understood in $H^1_L.$  Let $M$ be an integer larger than $(n_0+3)/2$. As the linear span of atoms is dense in $H^1_L,$ in view of \eqref{eq:UniBound} it suffices to verify \eqref{eq:ImaConvExp} for $f$ being a fixed $(1,2,2M)$ atom.
Then $f=L^{2M}b.$ Moreover, $a=L^{M}b$ is a multiple of a $(1,2,M)$ atom, with a multiple constant that depends on $f.$ Let  $m_u(\la)=\la^{2M}(\la^{2iu}-1)\phi(s\la^2)$ and let $\eta$ be a non-zero smooth function supported in $[1/2,2].$
A short computation shows that $$\lim_{u\to 0}\sup_{t>0}\|\eta(\cdot) m_u(t\cdot )\|_{W^{2,M-1}}=0.$$ We also have  $(L^{iu}-1)\phi(sL)f=m_u(\sqrt{L})(a)$ with $a$ being a $(1,2,M)$-atom. Since $M-1>(n_0+1)\slash 2,$  using Theorem \ref{thm:DzPr} we finish the proof of Corollary \ref{cor:StrIma}.
\end{proof}
\begin{remark}
Corollary \ref{cor:StrIma} seems crucial in extending various results in harmonic analysis based on the group of imaginary powers from the $L^p$ to the $H_L^1$ setting. For potential applications see e.g. \cite{Meda_gmt} or \cite[Remark 3]{Wrobel_cMH}.
\end{remark}

\subsection*{Acknowledgments}
The research of the first named author was supported by Polish funds for sciences, National Science Centre (NCN), Poland, Research Project
DEC-2012\slash 05\slash B\slash ST1\slash 00672. The research of the second named author was supported by Polish funds for sciences, National Science Centre (NCN), Poland, Research Project DEC-2014\slash 15\slash D\slash ST1\slash 00405, by Foundation for Polish Science - START scholarship, and by the Italian PRIN 2011 project
\emph{Real and complex manifolds: geometry, topology and harmonic analysis}.

\bibliography{bib1}{}

\end{document}